\documentclass{amsart}

\usepackage{graphicx, amsmath, amsfonts, amssymb, hyperref, amsthm, comment, float, enumerate}
\usepackage[ruled,vlined]{algorithm2e}
\usepackage{tikz}

\newtheorem{theorem}{Theorem}[section]

\theoremstyle{definition}
\newtheorem{definition}[theorem]{Definition}
\newtheorem{example}[theorem]{Example}

\theoremstyle{remark}
\newtheorem{remark}[theorem]{Remark}

\numberwithin{equation}{section}

\begin{document}

\title[Thuston's Teapots and Graph Directed Systems]{Thurston's Teapots and Graph Directed Systems}


\author[Chenxi Wu]{Chenxi Wu}
\address{Department of Mathematics, UW Madison, Van Vleck Hall, Madison WI 53703}
\email{cwu367@wisc.edu}


\subjclass[2020]{Primary 37E05, Secondary 37B10, 37E30, 37F10, 37F35}

\date{\today}

\begin{abstract}
Thurston's Master Teapot is a geometric object that encodes the entropies of critically periodic unimodal maps. We establish the connection between this object and the ``Mandelbrot set'' of graph directed iterated function systems previously studied by Solomyak.
\end{abstract}

\maketitle

\section{Introduction}

In one of his last papers \cite{thurston2014entropy}, William Thurston defined the concept of the ``master teapot'':

\begin{multline*}
   T=\overline{\{(z, \lambda): \lambda=e^{h_f}, f\text{ unimodal map with periodic critical orbit },}\\
\overline{h_f \text{ topological entropy of }f, z\text{ Galois conjugate of }\lambda\}}
\end{multline*}

\noindent Here a unimodal map is an interval map $f$ with a single critical point $c$, separating two subintervals, on one of which the map is strictly increasing while on the other the map is strictly decreasing. ``Periodic critical orbit'' means that there is some positive integer $n$ such that $f^{\circ n}(c)=c$. Because the points $\{c, f(c), \dots, f^{\circ(n-1)}(c)\}$ split the interval into finitely subintervals, and these splitting constitutes a finite Markov decomposition, the number $\lambda$ is the spectral radius of an integer matrix hence an algebraic integer, and its Galois conjugates are all well defined.

In \cite{bray2021shape, lindsey2023characterization}, by making use of the Milnor-Thurston kneading theory \cite{milnor1988iterated}, my collaborators and I developed an algorithm which can be used to certify points which are not on the master teapots. The purpose of this notes is to establish, via this algorithm, the connection between horizontal slices of the teapot and the ``Mandelbrot set'' associated to a parameterized family of graph directed iterated function systems and some constant $C$. Here we follow the convention of Solomyak, by ``Mandelbrot set'' we mean the set of parameters where the limit set of this graph directed iterated function system contains $C$, see Definition \ref{dfnman} part 3. Combining with the work of Solomyak \cite{solomyak2005mandelbrot}, this fact can be used to show self similarities around certain points of these slices (see Figure \ref{fig:slice}). Similar arguments also applies to the cases in \cite{lindsey2025master, wu2023concatenations}.

\begin{figure}[H]
    \centering
    \includegraphics[width=0.7\linewidth]{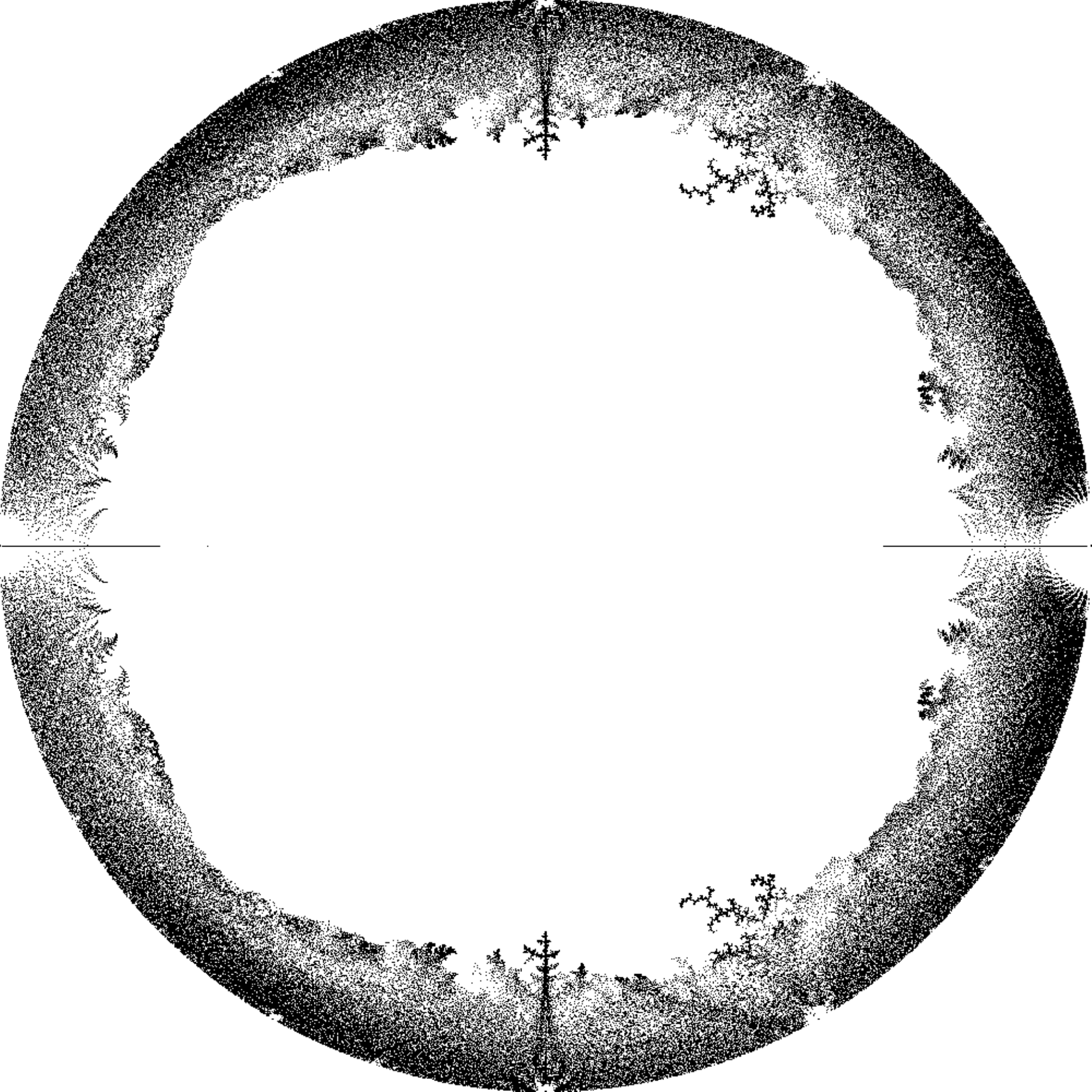}
    \caption{The horizontal slice at $\lambda=(\sqrt{5}+1)/2$ intersecting with the unit disc.}
    \label{fig:slice}
\end{figure}

\section{Graph Directed Iterated Function Systems}

Graph directed iterated function systems (GIFS) are generalizations of the concept of iterated function systems (IFS):

\begin{definition}\label{dfnman}\hspace{0pt}
\begin{enumerate}
    \item By a {\em family of graph directed iterated function systems}, we mean a finite directed graph $G$, where every edge $e$ is associated with a holomorphic function $f_e(z, x)$ from $\mathbb{C}^2$ to $\mathbb{C}$, and when $|z|<1$ it is contracting with respect to $x$.
    \item The {\em limit set} $\Lambda_z$ of a graph directed iterated function system with parameter $z$, is the closure of the points of the form 
    \[\lim_{n\to\infty}f_{e_0}(z, f_{e_1}(z, f_{e_2}(z,\dots f_{e_n}(z, 0)\cdot)))\]
    Where $e_0, e_1, e_2, \dots$ is any infinite sequence of consecutive edges in the graph starting at a base vertex $v_0$.
    \item The {\em Mandelbrot set} of this parametric family of graph directed iterated function systems for some constant $C$ is the set of all $z$ such that $C\in \Lambda_z$.
\end{enumerate}
\end{definition}

\begin{definition}\label{tent}
    Let $1<\lambda<2$. By a {\em tent map with slope $\lambda$} we mean the interval map from $[0, 1]$ to itself defined as 
    \[f_\lambda(x)=\begin{cases}\lambda x+2-\lambda & x\leq 1-1/\lambda\\ \lambda-\lambda x & x>1-1/\lambda\end{cases}\]
    We say it is {\em postcritically finite} if the forward orbit of the {\em critical point} $1-1/\lambda$ is finite.
\end{definition}

\begin{remark}
    A graph directed iterated function system with a single vertex and $n$ edges is an iterated function system with $n$ generators.
\end{remark}

    One can verify that the definition above is sufficient for running the argument by Solomyak and show that there are asymptotic self similarity around many points. Following the convention in \cite{lei1990similarity, solomyak2005mandelbrot}, we say a set $A$ in $\mathbb{C}$ is asymptotically self similar around at point $a$ if there is some $c\in\mathbb{C}$, $|c|>1$, such that the $\epsilon$-ball around $0$ of the sets $\{c^n(z-a): z\in A\}$ converges under Hausdorff topology. We say two sets $A$ and $B$ are asymptotically similar to one another around $a\in A$ and $b\in B$, if as $t\to\infty$, the Hausdorff distance between the $\epsilon$ ball around $0$ of the set $\{t(z-a): z\in A\}$ and the $\epsilon$ ball around $0$ of the set $\{t(z-b): z\in B\}$ converges to $0$. Given any infinite path $\gamma$ on $G$ going through consecutive edges $e_0, e_1, e_2, \dots$, denote 
    \[f_\gamma(z)=\lim_{n\to\infty}f_{e_0}(z, f_{e_1}(z, f_{e_2}(z,\dots f_{e_n}(z, 0)\cdot)))\]

    \begin{theorem}\label{jm}
        Suppose there is an infinite path $\gamma$ on $G$ of the form $\gamma_0\gamma_1^\infty$ but is not periodic, and $z\in\mathbb{C}$, $|c|<1$ is a simple root of the equation $f_\gamma(z)=C$, such that there are no other path $\gamma'$ on $G$ starting at $v_0$ satisfying $f_{\gamma'}(z)=C$. Then the Mandelbrot set is asymptotically self similar around $z$.
    \end{theorem}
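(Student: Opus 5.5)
We follow Solomyak's argument for the Mandelbrot set of a one-parameter IFS and adapt it to the graph directed setting. Write $M=\{w: C\in\Lambda_w\}$, and assume $|z|<1$ (the ``$|c|<1$'' in the statement). By compactness of the path space and continuity of $(w,\gamma')\mapsto f_{\gamma'}(w)$ for $|w|<1$, we have $\Lambda_w=\{f_{\gamma'}(w):\gamma'\text{ infinite path from }v_0\}$. Hence $w\in M$ iff $F(w,\gamma'):=f_{\gamma'}(w)-C$ vanishes for some path $\gamma'$.

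Write $\gamma_1$ as a cycle at a vertex $v_1$, of length $p$. Let $\Phi(w,x)$ be the composition of the maps $f_e(w,\cdot)$ along $\gamma_1$, and $\Psi(w,x)$ the composition along $\gamma_0$. The first step is to linearize near the fixed point. Let $x_1(w)$ be the unique attracting fixed point of $\Phi(w,\cdot)$; it depends holomorphically on $w$, and $f_{\gamma_0\gamma_1^\infty}(w)=\Psi(w,x_1(w))$. Put $c_0=\partial_x\Phi(z,x_1(z))$, so $0<|c_0|<1$ after excluding the degenerate case $c_0=0$ (see below). For a path $\gamma'$ from $v_0$ that begins with $\gamma_0\gamma_1^n$ and then continues with an arbitrary infinite path $\eta$ from $v_1$, we have
\[
f_{\gamma'}(w)=\Psi\bigl(w,\Phi^{n}(w,f_\eta(w))\bigr),
\qquad
\Phi^n(w,y)-x_1(w)\approx \lambda(w)^n\,h_w(y),
\]
where $\lambda(w)=\partial_x\Phi(w,x_1(w))$ and $h_w$ is the Koenigs linearizing coordinate of $\Phi(w,\cdot)$ at $x_1(w)$. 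The error in this approximation is uniformly $O(|c_0|^{2n})$ for $w$ near $z$.

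Set $w=z+u$ and expand $\Psi(w,x_1(w))-C=a u+O(u^2)$. Here $a\ne0$ because $z$ is a simple root of $f_\gamma=C$. Then the equation $f_{\gamma'}(w)=C$ becomes
\[
a u+O(u^2)+\partial_x\Psi(z,x_1(z))\,\lambda(w)^n\bigl(h_z(f_\eta(z))+o(1)\bigr)=0 .
\]
We look at scale $u=c_0^n v$ with $v$ bounded. Since $\lambda(z+c_0^nv)^n=c_0^n(1+O(nc_0^n))$, dividing by $c_0^n$ gives
\[
v=-\frac{\partial_x\Psi(z,x_1(z))}{a}\,h_z\bigl(f_\eta(z)\bigr)+o(1).
\]
The right-hand side ranges over a compact set $K$, the image of the limit set at vertex $v_1$ under a fixed affine map composed with $h_z$. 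By Hurwitz/Rouché applied to the holomorphic functions $v\mapsto c_0^{-n}F(z+c_0^nv,\gamma')$, uniformly over $\eta$, the set $c_0^{-n}(M-z)\cap B_R$ converges in the Hausdorff metric to $K\cap B_R$. This is the claimed self-similarity with ratio $c=c_0^{-1}$, since $|c|>1$.

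The second step, which I expect to be the main obstacle, is ruling out contributions from paths that do not begin with $\gamma_0\gamma_1^n$ for large $n$. Every other path $\gamma'$ from $v_0$ diverges from $\gamma$ at some finite depth $m$. We need $|F(w,\gamma')|$ to be bounded below uniformly in $\gamma'$, for each fixed $m$ and all $w$ near $z$. This follows from compactness: the set of paths diverging at depth exactly $m$ is compact, and by hypothesis $F(z,\cdot)$ has no zero on it. Any zero of $F$ for $w$ near $z$ therefore comes from a path agreeing with $\gamma$ to depth $N$, with $N\to\infty$ as $w\to z$. The hypothesis that $\gamma$ is not periodic guarantees that, for large $N$, every such path has the form $\gamma_0\gamma_1^n\eta$ with $\eta$ arbitrary; this is the form handled in the first step. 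One also needs a uniform lower bound on $|\partial_w F|$ to apply Rouché with constants uniform in $\eta$, and this comes from $a\neq0$. The degenerate case $c_0=0$ (a superattracting cycle) needs either a separate treatment or an assumption that it does not occur, and I would note this caveat.
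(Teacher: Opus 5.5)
Your route differs from the paper's. The paper verifies the four hypotheses of Tan Lei's Proposition 4.1 for the fibred set $X$ of differences $f_{\gamma'}(w)-f_\gamma(w)$; you instead linearize along $\gamma_1$ and rescale by hand. The asymptotics in your first step are fine for paths beginning with $\gamma_0\gamma_1^n$. The gap is that your second step does not deliver what the first step needs.

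Compactness at a fixed divergence depth $m$ gives some radius $r_m$ that contains no zeros from paths leaving $\gamma$ at depth $m$. It gives no control of $r_m$ in terms of $m$, so it only shows $N\to\infty$ as $w\to z$. What you need is that every zero in $B(z,R|c_0|^n)$ comes from a path beginning with $\gamma_0\gamma_1^n$. That is a statement uniform across scales, and for large $R$ it is false. By your own first step applied at scale $n-j$, a path $\gamma_0\gamma_1^{n-j}\eta$ yields a zero at rescaled position $v\approx c_0^{-j}A\,h_z(f_\eta(z))$, where $A=-\partial_x\Psi(z,x_1(z))/a$. This point lies in $B_R$ once $R$ is large.

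Concretely, take one vertex, $f_b(w,x)=b+wx$ for $b\in\{0,1\}$, $C=1.3$, $z=0.3$, $\gamma=110^\infty$; all hypotheses hold. Then $K=-0.09\,\Lambda_{0.3}\subset[-0.13,0]$. But the paths $110^{n-1}10^\infty$ give points of $c_0^{-n}(M-z)$ tending to $-0.3\notin K$. In general the limit is $\overline{\bigcup_{j\ge0}c_0^{-j}K}\cap B_R$, not $K\cap B_R$. Non-periodicity of $\gamma$ plays no role here: agreeing with $\gamma$ to depth $N$ automatically gives a prefix $\gamma_0\gamma_1^m$. The real issue is how $m$ compares with the scale $n$.

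The missing ingredient is a separation statement at one scale, transported to all scales. This is exactly what the neighbourhood $U$ and the expanding map $\Phi$ do in the paper's verification of Condition 3.

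Let $\delta$ be the minimum of $|h_z(f_\eta(z))|$ over paths $\eta$ from $v_1$ not beginning with $\gamma_1$. It is positive by compactness and the uniqueness hypothesis: $h_z(f_\eta(z))=0$ forces $\Phi^k(z,f_\eta(z))=x_1(z)$ for some $k$, i.e. $f_{\gamma_0\gamma_1^k\eta}(z)=C$.

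Your first-step estimates are uniform in $\eta$ and in $m\ge m_0$. So near $z$, every zero from $\gamma_0\gamma_1^m\eta$ with such $\eta$ satisfies $|w-z|\ge\tfrac12|A|\delta|c_0|^m$. Combine this with your fixed-depth compactness for the finitely many depths below $m_0$. This removes all paths not beginning with $\gamma_0\gamma_1^n$ from $B(z,R|c_0|^n)$ whenever $R<\tfrac12|A|\delta$ and $n$ is large.

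With $R$ restricted this way, which is all the definition asks for, your argument goes through. It also has the merit of identifying the limit explicitly, whereas the paper delegates everything to Tan Lei. You additionally need $A\ne0$, i.e. $\partial_x\Psi(z,x_1(z))\ne0$, alongside $c_0\ne0$. In the affine application both hold because $z\ne0$ there.
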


    \begin{proof}
        Follow \cite{lei1990similarity}, what we need to check are the 4 assumptions of \cite[Proposition 4.1]{lei1990similarity}. Firstly we define the set 
        \[X=\{(z, x): |z|<1,  x=f_{\gamma'}(z)-f_{\gamma}(z), \text{ where }\gamma'\]
        \[\text{ is an infinite path on }G\text{ starting at }v_0\}\]
        Then Condition 1 is by construction, the sections in Condition 2 are 
        \[\{x=f_{\gamma'}(z)-f_{\gamma}(z): |z|<1\}\]
        Condition 4 follows from the fact that $z$ is a simple root. To show Condition 3, note that around $f_\gamma(w)$ there is a small neighborhood $U$ such that if $f_{\gamma'}(w)\in U$, then $\gamma'$ is of the form $\gamma_0\gamma_1\gamma''$. Now let $\Phi$ be the map sending this point to $f_{\gamma_0\gamma''}(w)$. Because $f_e$ are all holomorphic, and because $\gamma$ is the unique infinite path on $G$ such that $f_\gamma(z)=C$, when $w$ is sufficiently close to $z$, this map $\Phi$ is well defined and holomorphic. Let $c=\Phi'(f_\gamma(w))$.

        Now the theorem follows from the conclusion of \cite[Proposition 4.1]{lei1990similarity}.
    \end{proof}

    The rough idea is as follows: every point in the Mandelbrot set can be associated with one or more infinite paths on the directed graph that ends at the base vertex. If a point is associated with only a pair of paths of the form $\gamma^\infty \gamma'$, i.e. the concatenation of a periodic infinite path and a finite path, and there is a neighborhood in the Mandelbrot set where all points are associated with a path of the form $\gamma_1\gamma\gamma'$, then sending it to the point associated with $\gamma_1\gamma'$ is an asymptotic self similarity map.

Now our main result can be stated as follows:

\begin{theorem}\label{mainthm}
   Let $\lambda\in (\sqrt{2}, 2)$ be a number where the tent map $f_\lambda$ is postcritically finite, then the slice of $T$ at height $\lambda$ (by which we mean $\{z\in\mathbb{C}: (z, \lambda)\in T\}$) intersecting with the unit disc is the Mandelbrot set of a parametrized family of graph directed iterated function system, where the directed graph is the transition graph of a graph directed iterated function system, the directed graph $G$ is the transition graph of the Markov decomposition of $f_\lambda$ with all edges reversed, and the maps associated with the edges are either $(x, z)\mapsto zx+2-z$ or $(x, z)\mapsto z-zx$, and the constant $C=1$.
\end{theorem}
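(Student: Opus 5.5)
We will use the characterization of the teapot slice from \cite{bray2021shape, lindsey2023characterization}. For $|z|<1$ it describes membership $(z,\lambda)\in T$ through the kneading data of $f_\lambda$. Roughly, $z$ lies in the slice if and only if the ``$z$-version'' of the tent map dynamics admits an orbit compatible with the itinerary combinatorics of $f_\lambda$ and returns to the critical value.

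Concretely, let $I_1,\dots,I_k$ be the intervals of the Markov decomposition of $[0,1]$ cut out by the postcritical orbit of $f_\lambda$. For each transition $I_i\to I_j$ (meaning $f_\lambda(I_i)\supset I_j$), the branch of $f_\lambda$ on $I_i$ is either $x\mapsto \lambda x+2-\lambda$ or $x\mapsto \lambda-\lambda x$. Its inverse branch from $I_j$ back to $I_i$ is an affine map with linear coefficient $1/\lambda$. The plan is to replace $1/\lambda$ by the parameter, and this gives the GIFS on the reversed transition graph. I expect the stated maps $zx+2-z$ and $z-zx$ to arise after this normalization, possibly after an affine change of coordinates fixing the constant $C=1$. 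We should check that this normalization is consistent with the criterion of \cite{bray2021shape}.

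The key steps, in order, are as follows.

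\textbf{Step 1.} Recall the criterion from \cite{bray2021shape, lindsey2023characterization}. For $|z|<1$, $(z,\lambda)\in T$ if and only if $1$ (the image of the critical value) is a limit of sums of the form
\[\sum_n \epsilon_n z^n \quad\text{(or the corresponding nested compositions)},\]
where the admissible sign/digit sequences $\epsilon_n$ are exactly the itineraries of points under $f_\lambda$. This is the ``certification'' algorithm, which expresses the kneading determinant as a power series.

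\textbf{Step 2.} Identify admissible itineraries of $f_\lambda$ with infinite paths in the Markov transition graph. Because the itinerary is read backwards when we write nested compositions $f_{e_0}(z,f_{e_1}(z,\dots))$, the paths run in the reversed graph starting at the base vertex $v_0$. Here $v_0$ is the vertex of the interval adjacent to the critical value.

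\textbf{Step 3.} Compute the nested compositions of the maps $zx+2-z$ and $z-zx$ along a path. Then verify that the limit $f_\gamma(z)$ equals the power series from Step 1. This is a direct telescoping computation, analogous to evaluating $x=\lim f^{-n}$ in the tent-map coordinates with $\lambda$ replaced by $z^{-1}$ along the inverse branches.

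\textbf{Step 4.} Handle closures. The slice of $T$ is a closure over all postcritically finite maps at height $\lambda$, while the limit set $\Lambda_z$ is closed by definition. For $|z|<1$ the maps are uniform contractions, so the sets vary continuously in $z$ and the Mandelbrot set is closed in the open disc. Matching both sides then reduces to the pointwise equivalence.

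The main obstacle is Step 1 combined with the hypothesis $\lambda>\sqrt2$. The criterion of \cite{bray2021shape} is phrased in terms of kneading sequences of all $f_{\lambda'}$ with $\lambda'\le\lambda$. These are dominated by the kneading sequence of $f_\lambda$, and the claim to be established is that for $\lambda>\sqrt2$ (where $f_\lambda$ is not renormalizable) they are precisely the itineraries realized by paths in the Markov graph of $f_\lambda$. I would first try to prove this using the Milnor–Thurston admissibility condition: a sequence is dominated by the kneading sequence exactly when all of its shifts satisfy the twisted-lexicographic inequality. That condition is the same as being an itinerary of some point under $f_\lambda$. Renormalizability below $\sqrt2$ is what would break the correspondence, which explains the hypothesis. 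We must also check that the endpoint conventions (points whose orbits hit the critical point) only affect a closed set and so disappear under closure.
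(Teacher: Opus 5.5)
Your overall reduction matches the paper's: invoke the characterization of \cite{lindsey2023characterization} for $|z|<1$, then match its admissible symbol sequences with backward paths in $\Gamma$ ending at $v_0$. But that matching is essentially the whole proof, and your plan for it rests on a misdescription of the criterion. Theorem 1.7 of \cite{lindsey2023characterization} asks for a $\lambda$-suitable $w$ with $f_{w_0,z}\circ\cdots\circ f_{w_{n-1},z}(z)\to 1$. Suitability constrains the \emph{reverses of the prefixes} of $w$, that is, itinerary segments read forward in time and ending at the critical value. These are compared with $It_{\lambda'}$ for \emph{all} $\lambda'>\lambda$ (not $\lambda'\le\lambda$), and there are two further requirements: the run-length condition (1) and the parity condition (3) for the case of equality. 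The Milnor--Thurston fact you cite (all shifts $\le$ the kneading sequence iff itinerary of a point) concerns forward infinite itineraries and gives only an upper bound. It does not show that a word obeying the criterion can be continued backward indefinitely inside $\Gamma$ while staying in the core $[0,1]$; that is where (1) enters. It also does not decide the equality cases. On the normalization: the edge maps are the forward branches of $f_\lambda$ with $\lambda$ replaced by $z$, applied in the time order of a $\Gamma$-path ending at $v_0$. They are not inverse branches with $1/\lambda$ replaced by $z$. The check you postpone is that $y\mapsto (y+z-2)/(z-1)$ conjugates $zy,\ 2-zy$ to $zx+2-z,\ z-zx$ and fixes $1$, so $C=1$ matches the limit $1$ in the criterion.

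The step that actually fails is disposing of the equality cases ``under closure''. The set of $\lambda$-suitable sequences and the set of symbol sequences of backward paths from $v_0$ are both closed in $\{0,1\}^{\mathbb N}$, and the equality and parity requirements are conditions on finite prefixes. If the two classes differed, they would differ on a relatively open set of sequences, and taking closures cannot reconcile two closed sets. Nor are these cases negligible. For postcritically finite $\lambda$ with $It_\lambda=\beta_\lambda^\infty$, one has $\lim_{\lambda'\downarrow\lambda}It_{\lambda'}=\beta'_\lambda\beta_\lambda^\infty\neq It_\lambda$; for the golden ratio this is $100(101)^\infty$ versus $(101)^\infty$. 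So whether a word coinciding with a kneading prefix is realized by a path depends on orientation, which is exactly what the parity rule records.

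What is missing is the argument the paper gives, in both directions. For a path, concatenate it with the path followed by the critical value to get an itinerary $\alpha It_\lambda$ of some $a\in[0,1]$. If $a<1$, then $\alpha It_\lambda<It_\lambda$ forces an odd number of $1$s in $\alpha$. If $a=1$, split into blocks $\beta_\lambda,\beta'_\lambda$ and exclude $\alpha=\beta_\lambda$ using admissibility of the kneading sequence. Conversely, rebuild the path from a suitable $w$ letter by letter and show it never gets stuck: (1) provides a later letter $1$, and kneading theory applied to the reversed prefix, which satisfies (2) and (3), produces a point realizing it, contradicting the assumption that the construction stopped.
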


\begin{example}
    As an example, if $\lambda=\frac{\sqrt{5}+1}{2}$ is the golden ratio, then the graph $G$ has $2$ vertices $v_0$ and $v_1$ (with $v_0$ being the base vertex), and three edges $e_1$, $e_2$ and $e_3$, such that $e_1$ is from $v_0$ to $v_0$, associated with $(x, z)\mapsto z-zx$, $e_1$ is from $v_0$ to $v_1$, associated with $(x, z)\mapsto zx+2-z$, $e_2$ is from $v_1$ to $v_0$, associated with $(x, z)\mapsto z-zx$.
\end{example}

\begin{remark}
    The argument in \cite[Proposition 3.10]{bray2021shape} shows that $\lambda<\sqrt{2}$, then $(z, \lambda)\in T$ iff $(z^2, \lambda^2)\in T$. Hence, following Theorem \ref{jm}, there is asymptotic self similarities of slices at any height, not just those heights above $\sqrt{2}$.
\end{remark}

\section{The Criteria in \texorpdfstring{\cite{lindsey2023characterization}}{[3]}}

In this section we review the main results in \cite{lindsey2023characterization}.

\begin{definition}
The {\em twisted lexicographic order} on $\{0, 1\}^\mathbb{N}$ is defined as follows:
$a=(a_i)<b=(b_i)$ iff there is some $j$, such that $a_i=b_i$ for all $i<j$, and
\[(-1)^{\sum_{i<j}a_i}(a_j-b_j)<0\]
The twisted lexicographic order can also be defined for finite words of the same length analogously.
\end{definition}

\begin{definition}
Let $\sqrt{2}<\lambda<2$, let $f_\lambda$ be the tent map as in Definition \ref{tent}. Let $I_0=[0, 1-1/\lambda]$, $I_1=[1-1/\lambda, 1]$. For any $x\in [0, 1]$, an {\em itinerary} of $x$ under $f_\lambda$ is an infinite sequence $It_\lambda(x)=(a_0, a_1, a_2, \dots)$, such that $f^{\circ i}(x)\in I_{a_i}$. The {\em itinerary} of $f_\lambda$ is defined as
\[It_\lambda=\lim_{x\to 1^-, It_\lambda(x)\text{ is unique}}It_\lambda(x)\]
\end{definition}

\begin{remark}
    If the forward orbit of $x$ does not contain the critical point $1-1/\lambda$, $It_\lambda(x)$ is unique. 
\end{remark}

\begin{remark}\label{propadmissible}
A key observation of the Milnor-Thurston kneading theory \cite{milnor1988iterated} (see also \cite{bray2021shape}), is that $It_\lambda$ is {\em admissible}, i.e. it is no less that itself with any proper prefix removed under the twisted lexicographic order.
\end{remark}

\begin{definition}\cite[Definition 1.5]{lindsey2023characterization}\label{dfn:suitable}
Let $\sqrt{2}\leq \lambda<2$. A sequence $w\in \{0, 1\}^\mathbb{N}$ is called {\em $\lambda$-suitable}, if it satisfies the following three conditions:
\begin{enumerate}
    \item If $It_\lambda$ starts with $10^n1$, there can not be more than $n$ consecutive $1$s in $w$.
    \item For any $2>\lambda'>\lambda$, the reverse of any $n$ prefix of $w$ is no more than the n-prefix of $It_{\lambda'}$ under the twisted lexicographic order.
    \item When the reverse of the $n$ prefix of $w$ equals the $n$ prefix of $It_{\lambda'}$, the number of $1$s in this $n$-prefix is odd.
\end{enumerate}
Here the $n$-prefix of a sequence $w_0w_1w_2\dots$ is the finite word $w_0w_1\dots w_{n-1}$, the reverse of a finite word $w_0w_1\dots w_{n-1}$ is the word $w_{n-1}w_{n-2}\dots w_0$.
\end{definition}

The main theorem in \cite{lindsey2023characterization} is the following:

\begin{theorem}\cite[Theorem 1.7]{lindsey2023characterization}
   Let $\lambda\in [\sqrt{2}, 2)$, $|z|<1$. Then $(z, \lambda)\in T$ iff there is a sequence $w\in \{0, 1\}^\mathbb{N}$, such that
   \begin{enumerate}
       \item $w$ is $\lambda$-suitable.
       \item $\lim_{n\to\infty}f_{w_0, z}\circ f_{w_1, z}\circ \dots\circ f_{w_{n-1}, z}(z)=1$, where $f_{0, z}(x)=zx$, $f_{1, z}(x)=2-zx$.
   \end{enumerate}
\end{theorem}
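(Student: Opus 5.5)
This is \cite[Theorem 1.7]{lindsey2023characterization}, and in the present notes I would simply invoke it. If I had to reprove it, I would argue through the kneading (``formal orbit'') description of Galois conjugates, as follows. For a critically periodic $f_{\lambda'}$ with itinerary $It_{\lambda'}=(a_0a_1\dots a_{p-1})^\infty$, write the orbit of the critical value in a normalized coordinate. In that coordinate each branch of $f_{\lambda'}$ becomes affine, $x\mapsto \lambda' x$ or $x\mapsto 2-\lambda' x$ up to the normalization. The periodicity of the critical orbit is then the polynomial identity
\[
F_{a}(\lambda'):=f_{a_0,\lambda'}^{-1}\circ\cdots\circ f_{a_{p-1},\lambda'}^{-1}(\text{crit. value})=\text{crit. value}.
\]
Because this identity has integer coefficients, it persists under every Galois embedding $\lambda'\mapsto z$. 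Running the same relation backwards, i.e.\ composing the maps $f_{0,z}(x)=zx$ and $f_{1,z}(x)=2-zx$ along the \emph{reversed} itinerary, gives condition (2) for $z$: the periodic word $w$ obtained by reversing $It_{\lambda'}$ satisfies it. The contraction for $|z|<1$ makes the infinite composition converge regardless of the starting point.

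For the forward direction, take $(z,\lambda)\in T$ with $|z|<1$. I would approximate it by pairs $(z_k,\lambda_k)$, where $z_k$ is a conjugate of a critically periodic $\lambda_k\to\lambda$. By the observation above, each pair gives a word $w^{(k)}$ (the reversed itinerary, repeated) satisfying (2) at $z_k$. Pass to a subsequence with $w^{(k)}\to w$ in the product topology. The convergence in (2) is uniform on compact subsets of the disc, so $w$ satisfies (2) at $z$.

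Suitability of $w$ comes from admissibility (Remark \ref{propadmissible}). Every reversed prefix of $w^{(k)}$ is a subword of $It_{\lambda_k}$, hence is bounded in the twisted order by the corresponding prefix of $It_{\lambda_k}$. Monotonicity of $It_{\lambda'}$ in $\lambda'$ then gives condition (2) of Definition \ref{dfn:suitable} in the limit. The parity condition (3) handles the boundary case of equality, where the limiting orientation matters. Condition (1) is the constraint on runs of $1$'s forced by $It_\lambda$ beginning with $10^n1$, which passes to limits.

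For the converse, given a suitable $w$ with (2) at $z$, I would construct critically periodic $\lambda'$ close to $\lambda$ (from above) whose itinerary ends with the reverse of a long prefix $w_0\dots w_{N-1}$. Suitability is exactly what makes such a concatenation admissible, so it is realized by some $\lambda'$ by kneading theory, using the density of critically periodic parameters. The associated polynomial relation then differs from $f_w(z)-1$ by an error of size $O(|z|^N)$. Hurwitz's theorem (or Rouché) on a small disc around $z$ produces a zero, i.e.\ a Galois conjugate of $\lambda'$ near $z$, assuming $f_w(\cdot)-1$ is not identically zero near $z$; the degenerate case is handled by a perturbation. I expect this converse construction to be the main obstacle: showing that suitable words glue to admissible periodic kneading sequences with $\lambda'\to\lambda$, including the parity/equality cases of condition (3).
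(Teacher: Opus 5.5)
Invoking \cite[Theorem 1.7]{lindsey2023characterization} is exactly what the paper does. The result is quoted in Section 3 without proof and used as a black box in Section 4, so your first sentence is all these notes need. The reproof you sketch, however, should not be presented as a proof: besides the gluing step you already flag, it contains a step that is unjustified as written.

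\textbf{The converse.} Hurwitz/Rouch\'e applied to $f_{w'}(\cdot)-1$ gives a zero near $z$ of
\[
P(t)=f_{\beta_{p-1},t}\circ\cdots\circ f_{\beta_0,t}(1)-1\in\mathbb{Z}[t],
\]
where $\beta$ is the period block of $It_{\lambda'}$. Such a zero is a Galois conjugate of $\lambda'$ only if it is a root of the minimal polynomial of $\lambda'$.
\begin{enumerate}
\item $P$ is reducible in general. For instance, it always vanishes at $t=1$, where both $x\mapsto tx$ and $x\mapsto 2-tx$ fix $1$.
\item Nothing in your argument controls which factor the Hurwitz zero belongs to.
\end{enumerate}
So your ``i.e.\ a Galois conjugate'' needs a separate argument: zeros in $\mathbb{D}$ of these kneading polynomials must be shown to be limits of genuine conjugates. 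The gluing step itself is also left open, and it is the real content of the cited theorem. That step is completing a reversed long prefix of a suitable word to an admissible periodic kneading sequence of some $\lambda'\downarrow\lambda$. So the converse is essentially unproved.

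\textbf{The forward direction.} Condition (3) is described rather than proved. It needs an argument like the one in Section 4.1:
\begin{enumerate}
\item Suppose the reversed prefix $\alpha$ equals the $n$-prefix of $It_{\lambda'}$ with $\lambda'>\lambda$.
\item For large $k$, squeezing shows $\alpha$ is both the $n$-prefix of $It_{\lambda_k}$ and a subword of it ending at a period boundary.
\item If $\alpha$ had even parity, admissibility would force $\sigma^n It_{\lambda_k}=It_{\lambda_k}$. Then $\alpha$ would be a power of the even-parity period block of $It_{\lambda_k}$.
\item But $It_{\lambda'}$ cannot begin with that block when $\lambda'>\lambda_k$, so $\alpha$ has odd parity.
\end{enumerate}
Condition (1) as printed (runs of $1$s) already fails for genuine reversed itineraries, because orbits can linger near the fixed point of the decreasing branch. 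It must be read as a bound on runs of $0$s, which is how Section 4.2 uses it. Your remark that it ``passes to limits'' is not an argument for either reading.

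Finally, the degenerate case you mention never occurs. Since $f_w(0)=2w_0$, we have $f_w-1\not\equiv 0$ on $\mathbb{D}$.
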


\section{Proof of the Main Theorem}

\begin{proof}[Proof of Theorem \ref{mainthm}]
We first construct the parameterized family of graph directed iterated function systems. Consider the finite Markov decomposition of $f_\lambda$ formed by splitting interval $[0, 1]$ via the forward orbit of the critical point $1-1/\lambda$. Let $\Gamma$ be a directed graph, each vertex corresponding to a Markov block and there is an directed edge from  vertex $a$ to vertex $b$ iff the image of the interval corresponding to vertex $a$ under $f_\lambda$ contains the interval corresponding to vertex $b$. For every directed edge, if the starting vertex corresponds to a subinterval to the left of $1-1/\lambda$, associate it with the map $(x, z)\mapsto zx+2-z$; if the corresponding subinterval is to the right of $1-1/\lambda$, associate it with the map $(x, z)\mapsto z-zx$. We call the vertex corresponding to the subinterval containing critical value $1$ the base vertex $v_0$. To construct the family of graph directed iterated function systems, let $G$ be $\Gamma$ with all edges reversed, while keeping the base vertex and all the associated maps.

Now given any infinite path on $\Gamma$ ending at $v_0$ of the form
\[\dots\to v_2\to v_1\to v_0\]
Consider a 0-1 sequence $w=(w_0, w_1, \dots)$, such that $w_i=1$ if the subinterval corresponding to $v_i$ is to the right of $1-1/\lambda$, $w_i=0$ if it is to the left. Now the only thing remains to be shown is that such $w$ are exactly all the $\lambda$-suitable sequences in Definition \ref{dfn:suitable}.

\subsection{Show that $w$ are all $\lambda$-suitable}

From the definition of $\Gamma$, given any finite or infinite path on $\Gamma$, passing through vertices $a_0, a_1, \dots, a_n$, there is some $x$ in the interval such that $f_\lambda^j(x)$ is in the subinterval corresponding to vertex $a_j$. Hence from the construction of $w$, the reverse of any prefix of $w$ is a section of the itinerary of some point under $f_\lambda$. This implies Parts (1) and (2) of Definition \ref{dfn:suitable}. To show Part (3), suppose a finite word $\alpha$ which is the reverse of a prefix of $w$ is identical to a prefix of some $It_{\lambda'}$, then it is also identical to a prefix of $It_\lambda$. Now because $w$ comes from a path on $\Gamma$ ending at $v_0$, if we concatenate it with the path on $\Gamma$ where the vertices correspond to the subintervals containing $f_\lambda^j(1)$ (when there is ambiguity pick the subinterval corresponding to the definition $It_\lambda$, namely, the subinterval containing $f_\lambda^j(1-\epsilon)$ for some very small positive $\epsilon$), then we get an infinite path on $\Gamma$. Hence, we can find some $a\in [0, 1]$ such that $\alpha$ concatenated with $It_\lambda$ is an itinerary in $It_\lambda(a)$. 
If $a<1$, then $\alpha It_\lambda<It_\lambda$, which implies that the number of $1$s in $\alpha$ is odd. If $a=1$, let $\beta_\lambda$ be the shortest word such that $It_\lambda=\beta_\lambda^\infty$, $\beta'_\lambda$ be $\beta_\lambda$ with the last letter flipped from $0$ to $1$ and $1$ to $0$, then $\alpha It_\lambda$ is a concatenation of $\beta_\lambda$ or $\beta'_\lambda$. If $\alpha=\beta'_\lambda$ then the number of $1$s is odd. If $\alpha=\beta_\lambda$, suppose $j>0$ is the largest integer such that $It_\lambda'$ has a prefix of the form $\alpha^j$. Remove the common $\alpha^j$ prefix of $It_\lambda$ and $It_\lambda'$, we see that $It_\lambda'$ should have a prefix of the form $\alpha^j\gamma$ where $\gamma>\alpha$. Now compare $It_\lambda'$ and $It_\lambda'$ with the prefix $\alpha^j$ removed, we see that this contradicts with Remark \ref{propadmissible} hence is impossible.

\subsection{Show that \texorpdfstring{$\lambda$}{lambda}-suitable sequences all appear as some \texorpdfstring{$w$}{w}} We can recover the infinite path on $G$, where $v_i$ are the $i$-th vertex, from a $\lambda$-suitable sequence $w=(w_0, w_1, \dots)$ as follows: let $v_0$ be the vertex associated with the right most subinterval. For each $i$, if $w_i=0$, let $v_i$ be the subinterval to the left of the critical point whose image contains the subinterval associated with $v_{i-1}$, otherwise let $v_i$ be the subinterval to the right. $\lambda$-suitability now guaranteed that this procedure can continue indefinitely. If this is not the case and the procedure stops at $v_i$, then let $j$ be the smallest integer larger than $i$ such that $w_j=1$. The existence of such a $j$ is due to (1). Now take the $j+1$-th prefix of $w$ and reverse it, then by kneading theory, if both (2) and (3) are true, there must be some point on $[0, 1]$ with an itinerary where the $j+1$ prefix equals this finite $0-1$ word. This is a contradiction. \\ 

This finishes the proof of Theorem \ref{mainthm}.
\end{proof}

\bibliographystyle{amsplain}
\bibliography{refs}

\end{document}